\def\graphit#1{ \centerline{\includegraphics{#1}} }
\newtheorem{theorem}{Theorem}
\newtheorem{corollary}[theorem]{Corollary}
\newtheorem{lemma}[theorem]{Lemma}
\newtheorem{proposition}[theorem]{Proposition}
\def\half{{1 \over 2}}
\def\Var{{\rm Var}}
\def\P{{\bf P}}
\def\E{{\bf E}}
\def\IZ{{\bf Z}}
\def\Geom{{\rm Geom}}
\def\un{\underbar}
\def\acc{{\rm acc}}
\def\eff{{\rm eff}}
\def\A{{\cal A}}
\def\B{{\cal B}}
\def\C{{\cal C}}
\def\ld{\ldots}
\begin{document}

\baselineskip=18pt

\begin{center}
\Large\bf
% Weak Convergence of Non-Reversible Lifted Markov Chains,
% with applications to MCMC and Tempering Algorithms
%Comparison of Reversible and Non-Reversible \\
%How Momentum affects
%The Impact of Momentum on
%Quantifying the Momentum Speed-Up for \\
%the Convergence of \\
%How much can Momentum speed-up \\
Quantifying the Speed-Up from
Non-Reversibility
% Momentum

in MCMC Tempering Algorithms
\end{center}

\centerline{Gareth O.\ Roberts \quad and \quad Jeffrey S.\ Rosenthal}

\centerline{University of Warwick \quad \quad \quad University of Toronto}

\bigskip
\centerline{(January, 2025)}

\bigskip

\section{Introduction}

Markov chain Monte Carlo (MCMC) algorithms are extremely important for
sampling from complicated high-dimensional densities, particularly in Bayesian Statistics
(see e.g.~\cite{handbook} and the many references therein).
Traditional MCMC algorithms like the Metropolis-Hastings algorithm
\cite{metropolis,hastings} are reversible.
However, in recent years there has been a trend towards using
versions
which introduce ``momentum'' and hence are non-reversible in some sense,
in an effort to avoid diffusive behaviour
\cite{suppress,radfordbetter,DHN,bouncy,zigzag}.

Many of the most challenging problems in sampling complex distributions come from multi-modality. In this context, the most successful approaches have been
simulated and parallel tempering algorithms. These algorithms add auxiliary temperature variables to improve mixing between modes
\cite{simtemp,swendsen,marinari,mcmcmc}.
Parallel tempering (which proceeds with a particle at each of a collection of auxiliary temperatures)  can be implemented by alternating even and odd index temperature
swap proposals.
% The resulting algorithm is non-reversible and by monitoring each particle as it moves around, it can create an effect of momentum within the temperature space
% and thus increase efficiency
The resulting algorithm is non-reversible (despite being constructed from reversible components)
%has a non-reversible limit, 
and can create an effect of momentum for each particle as it moves around the temperature space, thus increasing efficiency
\cite{okabe,syed,ABC2022,ABC2023,ABC2024}.  This idea was also combined with efficient parallel implementation to create general-purpose software \cite{pigeons}.

In this paper, we provide a theoretical investigation of the extent to
which such non-reversibility can improve the efficiency of tempering MCMC algorithms.
In a certain diffusion-limit context, under strong assumptions,
we prove
% (Corollary~\ref{corfortytwo})
that an optimally-scaled non-reversible MCMC sampler is indeed
more efficient than the corresponding optimally-scaled
reversible version, but the speed-up is only a modest 42\%.
This suggests that non-reversible MCMC is indeed worthwhile, but cannot
hope to convert intractable algorithms into tractable ones.

% It was previously argued \cite[Corollary~1]{syed} that as the
% temperature spacings go to zero, non-reversible tempering
% converges at a higher order.

To demonstrate this, we first study (Section~\ref{sec-model}) a
simple Markov chain that can help model the reversible and non-reversible
behaviour of tempering algorithms.
We prove (Theorem~\ref{brownianthm}) that even a non-reversible-style
version of this chain still exhibits diffusive behaviour at
appropriate scaling.
We then consider (Section~\ref{sec-rescale}) rescaling space by
a factor of $\ell$, and describe certain ``optimal'' $\ell$ values.
We then apply (Section~\ref{sec-temper}) this reasoning to
tempering MCMC algorithms,
and prove
under the theoretical framework of \cite{atchade,sectemper}
that the reversible and non-reversible versions have different
efficiency curves (Theorem~\ref{curvethm})
and optimal scaling values (Theorem~\ref{optthm}),
leading to the 42\% improvement under optimality
(Corollary~\ref{corfortytwo}).

\section{A Double-Birth-Death Markov Chain}
\label{sec-model}

To study the effects of momentum on tempering, we first digress to study a
simple double-birth-death Markov chain, which may be of independent interest.

Consider the following discrete-time countable-state-space Markov chain,
which can be viewed as an infinite-size generalisation of the
simple finite example studied in \cite{DHN,mirageyer}.

This Markov chain has state space equal to $\IZ \times \{+,-\}$,
and transition probabilities given by
$\P((i,+),(i+1,+)) = A$,
$\P((i,+),(i-1,+)) = B$,
$\P((i,+),(i,-)) = C$,
$\P((i,-),(i-1,-)) = A$,
$\P((i,-),(i+1,-)) = B$,
$\P((i,-),(i,+)) = C$,
where $A+B+C=1$ are non-negative constants
with $C>0$.
(See Figure~1.)

\begin{center}
\fbox{ \includegraphics[height=2.5in]{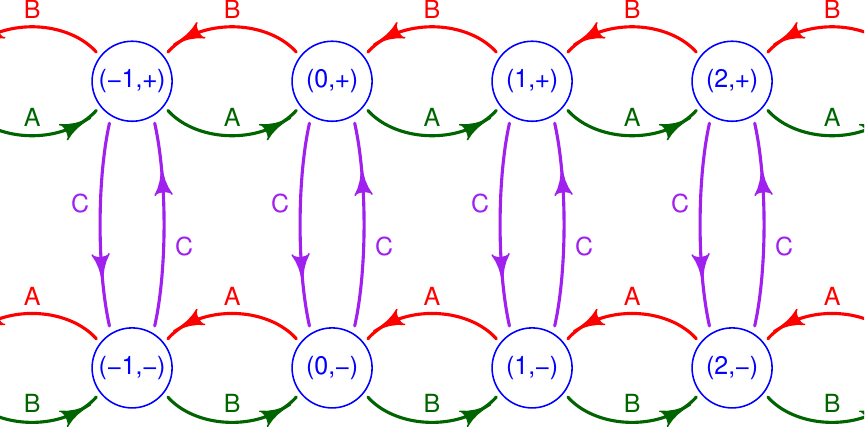} }
\par
{\bf Figure~1: Diagram of the double-birth-death Markov chain.}
\end{center}

This chain can be viewed as a ``lifting'' of a symmetric walk on $\IZ$.
That is, if states $(i,+)$ and $(i,-)$ are combined into a single state
$i$ for each $i\in\IZ$, with the chain equally likely to be at
$(i,+)$ or $(i,-)$, then this combined process is itself a
Markov chain which has the symmetric transition probabilities
$P(i,i+1) = P(i,i-1) = (A+B)/2$ and $P(i,i) = C$.
However, the full non-combined
chain has non-symmetric transitions whenever $A \not= B$.

Of course, if $A=B$, then this chain becomes a symmetric walk on both
$\IZ \times \{+\}$ and $\IZ \times \{-\}$.
By contrast,
if $A > B$, then it has a positive bias on $\IZ \times \{+\}$,
and a negative bias on $\IZ \times \{-\}$.
Indeed, if $B=0$, then it moves only positively on $\IZ \times \{+\}$,
and only negatively on $\IZ \times \{-\}$.

Despite the non-reversible-seeming
nature of this simple Markov chain,
the following result
(proved in Section~\ref{sec-proof})
gives a diffusion limit of a rescaled version, with
a full Functional Central Limit Theorem
(i.e., Donsker's Invariance Principle).
For notation, let the state of this chain at time $n$ be given by
$(X_n,Y_n)$ where $X_n$ is the horizontal integer and $Y_n$ is
the vertical $\pm$ value.
Then we have:

\begin{theorem}
\label{brownianthm}
Let $\{(X_n,Y_n)\}_{n=0}^\infty$ follow the Markov chain of Figure~1.
Let $Z_{M,\cdot}$ be the random process defined by
$Z_{M,t} := {1 \over \sqrt{M}} \, X_{\lfloor Mt \rfloor}$ for $t \ge 0$.
Then as $M\to\infty$,
the process $Z_{M,\cdot}$ converges
weakly to Brownian motion with zero drift and with
volatility parameter given by
\begin{equation}\label{vformula}
v = [(A-B)^2/C] + (A+B)
\, .
\end{equation}
In particular, for each fixed $t>0$,
as $M\to\infty$, the random variable
$Z_{M,t} := {1 \over \sqrt{M}} \, X_{Mt}$
converges weakly to the $N(0,vt)$ distribution.
\end{theorem}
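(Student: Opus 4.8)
\medskip
\noindent\emph{Proof strategy.}
The plan is to recognise $X_n = \sum_{k=0}^{n-1}\Delta_k$, with $\Delta_k := X_{k+1}-X_k \in \{-1,0,+1\}$, as an additive functional of the ``sign'' process $\{Y_n\}$, which is itself an ergodic two-state Markov chain, and then to invoke a martingale functional central limit theorem. First I would record the elementary facts about the sign. Writing $S_n = +1$ when $Y_n = +$ and $S_n = -1$ when $Y_n = -$, the process $S_n$ is a symmetric two-state chain that flips with probability $C$ and stays put with probability $A+B = 1-C$; since $C>0$ it is ergodic with uniform stationary distribution and geometric decay $\E_\pi[S_m S_n] = (1-2C)^{|n-m|}$. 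Conditioning on the current sign gives $\E[\Delta_n \mid Y_n] = (A-B)S_n$ and $\E[\Delta_n^2 \mid Y_n] = A+B$, so in stationarity each increment has mean zero (hence the limit has zero drift) and the whole problem reduces to a variance computation.

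Next I would set up a martingale decomposition for the natural filtration $\F_n = \sigma((X_0,Y_0),\dots,(X_n,Y_n))$. The part $\xi_n := \Delta_n - \E[\Delta_n\mid Y_n]$ is a martingale-difference sequence, and for the remaining drift-like part $(A-B)\sum_k S_k$ I would solve the Poisson equation $(I-P)g = (A-B)S$ on the two-state chain, which by symmetry has the explicit solution $g(+) = (A-B)/(2C) = -g(-)$. This yields $(A-B)\sum_{k=0}^{n-1}S_k = \sum_{k=0}^{n-1}\bigl(g(Y_{k+1})-Pg(Y_k)\bigr) + g(Y_0)-g(Y_n)$, in which the first sum is again a sum of martingale differences and the last two terms are bounded. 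Combining the two pieces gives $X_n = \mathcal{M}_n + \bigl(g(Y_0)-g(Y_n)\bigr)$, where $\mathcal{M}_n$ is a martingale whose increments $D_k := \xi_k + g(Y_{k+1})-Pg(Y_k)$ are uniformly bounded and form a stationary ergodic sequence.

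I would then apply the martingale FCLT to $\mathcal{M}_{\lfloor Mt\rfloor}/\sqrt M$. The Lindeberg condition is immediate since the $D_k$ are uniformly bounded, and the required quadratic-variation convergence $\frac1M\sum_{k=0}^{\lfloor Mt\rfloor-1}\E[D_k^2\mid\F_k] \to \sigma^2 t$ follows from the ergodic theorem for the finite chain $Y$. This gives weak convergence of $\mathcal{M}_{\lfloor Mt\rfloor}/\sqrt M$ to Brownian motion of some variance $\sigma^2$ (tightness being part of the FCLT package). Because the boundary term $\bigl(g(Y_0)-g(Y_n)\bigr)/\sqrt M \to 0$ uniformly on compact time sets, as $g$ is bounded, the process $Z_{M,\cdot} = X_{\lfloor Mt\rfloor}/\sqrt M$ has the same Brownian limit, and the one-dimensional $N(0,vt)$ statement is just the marginal at time $t$.

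Finally I would identify $\sigma^2 = v$. Since $X_n$ and $\mathcal{M}_n$ differ by the bounded quantity $g(Y_0)-g(Y_n)$, one has $\sigma^2 = \lim_N \Var(X_N)/N$, which the Green--Kubo formula expresses as $\Var_\pi(\Delta_0) + 2\sum_{k\ge1}\mathrm{Cov}_\pi(\Delta_0,\Delta_k)$. I expect this covariance computation to be the main obstacle and the place demanding the most care, precisely because the increment and the sign-flip are coupled: whenever the sign flips the spatial increment is forced to be $0$, so the increments are \emph{not} conditionally independent of the flips in the naive way. Tracking this correctly yields $\mathrm{Cov}_\pi(\Delta_0,\Delta_k) = (A-B)^2(1-2C)^{k-1}$ for $k\ge1$ (with exponent $k-1$, not $k$), whence $\sum_{k\ge1}\mathrm{Cov}_\pi(\Delta_0,\Delta_k) = (A-B)^2/(2C)$ and $\sigma^2 = (A+B) + (A-B)^2/C = v$, matching \eqref{vformula}. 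As an independent check, and an alternative route to the finite-dimensional distributions, I would compute the dominant eigenvalue $\lambda(\theta) = 1 - \half v\,\theta^2 + O(\theta^4)$ of the transfer matrix $\left(\begin{smallmatrix} Ae^{i\theta}+Be^{-i\theta} & C \\ C & Ae^{-i\theta}+Be^{i\theta}\end{smallmatrix}\right)$, since $\E[e^{i\theta X_n}]\approx\lambda(\theta)^n$ and $\lambda(u/\sqrt M)^{Mt}\to e^{-vu^2t/2}$ reproduces both $v$ and the Gaussian marginal.
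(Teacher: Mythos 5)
Your proposal is correct, and it takes a genuinely different route from the paper. The paper exploits the regenerative structure of the chain: it decomposes the path at the successive return times $T_n$ to the top row (Lemma~\ref{difflemma}), computes the mean cycle length $2/C$ and the cycle-increment variance $2[(A-B)^2/C^2]+2[(A+B)/C]$ via Wald's identities (Lemma~\ref{diffvar}), and then invokes invariance principles for processes with regenerative increments (Janson; Gut--Janson) to get $v = \Var[X_{T_n}-X_{T_{n-1}}]/\E[T_n-T_{n-1}]$. You instead treat $X_n$ as an additive functional of the two-state sign chain, solve the Poisson equation to get the martingale approximation $X_n = \mathcal{M}_n + g(Y_0)-g(Y_n)$ with $g(\pm)=\pm(A-B)/(2C)$, apply the martingale FCLT (Lindeberg trivial by boundedness, quadratic variation by the ergodic theorem), and identify the variance by the Green--Kubo series. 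Your computations check out: the solution of the Poisson equation is right, and --- the one place where a careless argument would fail --- you correctly get $\mathrm{Cov}_\pi(\Delta_0,\Delta_k)=(A-B)^2(1-2C)^{k-1}$ rather than the naive $(A-B)^2(1-2C)^{k}$, because $\Delta_0$ and the flip indicator at step $0$ are coupled ($\E[\Delta_0 S_1\mid Y_0]=(A-B)S_0$, not $(A-B)(1-2C)S_0$); summing gives $(A+B)+(A-B)^2/C = v$ in agreement with \eqref{vformula}, and your transfer-matrix eigenvalue expansion $\lambda(\theta)=1-\half v\theta^2+O(\theta^4)$ is also correct as an independent confirmation. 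As for what each approach buys: the paper's regenerative decomposition reduces everything to two explicit Wald computations but leans on somewhat specialized results from the regenerative-process literature, whereas your argument uses only textbook martingale machinery, is essentially self-contained, and generalizes immediately to any bounded additive functional of a finite ergodic Markov chain --- at the cost of the more delicate covariance bookkeeping that you flagged and handled correctly.
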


In the special case where $A=B$,
% so $C=1-2A$,
this volatility becomes
\begin{equation}\label{volrev}
v
\ = \ [(A-B)^2/C] + (A+B)
\ = \ [0] + (2A)
\ = \ 2A
% \ = \ 1-C
\, .
\end{equation}
In particular, if $C \searrow 0$ while $A=B \nearrow \half$,
then $v \to 1$ as for standard Brownian motion,
exactly as it should.
Or, in the special case where $B=0$,
%  so $A+C=1$,
% (which will be especially relevant below),
this volatility becomes
\begin{equation}\label{volnonrev}
v
\ = \ [(A-B)^2/C] + (A+B)
\ = \ [A^2/(1-A)] + A
% \ = \ (A^2 + A - A^2) / (1-A)
\ = \ A/(1-A)
% \ = \ (1-C)/C
\, .
\end{equation}

% If $B=0$, then $A-B=A$ and $1-C=A$, and this becomes
% $$
% v \ = \
% { [A^2/(1-A)^2] + [A/(1-A)]
% \over
% 2 + 2 A/(1-A) }
% \ = \ \half \ { (A/(1-A)) (A/(1-A)+1) \over 1 + A/(1-A) }
% $$
% $$
% \ = \ \half \ { (A/(1-A)) (1/(1-A)) \over 1/(1-A) }
% \ = \ \half \ { A/(1-A)^2 \over 1/(1-A) }
% \ = \ \half \ { A \over 1-A }
% \, .
% $$
% 
% Or, if $A=B$, then $C=1-2A$, and this becomes
% $$
% v
% \ = \ { 2A / (1-2A) \over 2 + 2 (2A) / (1-2A) }
% \ = \ { 2A \over 2 (1-2A) + 4A }
% \ = \ A
% \, .
% $$
% 
% In particular, if $C \searrow 0$ while $A=B \nearrow \half$,
% then $v \to \half$, as expected.

We note that
the intention of such a chain, at least when $A \gg B$, is to provide a
``momentum'' whereby the chain moves to the right along the top row for
long periods of time, and then to the left along the bottom row for long
periods of time, thus sweeping and exploring large regions more
efficiently than in the diffusive symmetric $A=B$ case.
That is indeed the case, over modest time intervals.
However, the chief observation of Theorem~\ref{brownianthm} is that
over larger time intervals, the chain will reverse direction sufficiently
that it will still exhibit diffusive behaviour, just on a larger time
scale.  By rescaling the chain appropriately, the diffusive behaviour can
still be identified and quantified, and hence directly
compared to the symmetric case, as we discuss below.

\section{Rescaling Space and Transition Probabilities}
\label{sec-rescale}

The study of MCMC algorithms includes scaling questions,
regarding how large their step sizes should be \cite{RGG, atchade}.
For the Markov chain in Figure~1, this corresponds to
expanding space by a constant factor of $\ell>0$, i.e.\ regarding the
adjacent points as being a distance $\ell$
apart rather than having unit distance.

In this context, the transition probabilities
$A=\A(\ell)$ and $B=\B(\ell)$ and $C=\C(\ell)$
also become functions of $\ell$
(still summing to~1 for each $\ell$).
The value of $v$ in
% Lemma~\ref{difflemma}
Theorem~\ref{brownianthm}
then becomes a corresponding function of $\ell$, too, i.e.\
$$
v \ = \ v(\ell)
\ = \
[(\A(\ell)-\B(\ell))^2/\C(\ell)] + (\A(\ell)+\B(\ell))
\, .
$$
Since distance is itself scaled by a factor of $\ell$, it follows
that the effective volatility of the rescaled process is now proportional
to $\ell^2 \, v(\ell)$, with $v(\ell)$ as above.

Now, it is known \cite[Theorem~1]{sectemper}
that limiting diffusions are most efficient in
terms of minimising Monte Carlo variance when their volatility is largest.
Hence, to make a MCMC algorithm most efficient, we need to maximise
that effective volatility, henceforth referrred to as the
efficiency function $\eff(\ell) := \ell^2 \, v(\ell)$.

Of course, this maximisation depends on the functional
dependence of $\A(\ell)$ and $\B(\ell)$,
i.e.\ how the transition probabilities are affected by the spacing $\ell$.
However, there will often exist an optimal value $\ell_* > 0$ which maximises $\eff(\ell)$.
For example, we have the following results.

\begin{proposition}
If $\A(\ell)$ is positive, log-concave, $C^1$, and non-increasing, then there exists a unique  $\ell_* > 0$ which maximises $\eff(\ell)$.
    \end{proposition}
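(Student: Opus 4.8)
The plan is to take logarithms and turn log-concavity of $\A$ into monotonicity of a derivative. Since only $\A$ enters the hypotheses, the relevant efficiency is the reversible one: by~(\ref{volrev}) the volatility reduces to $v(\ell)=2\A(\ell)$, so $\eff(\ell)=2\ell^2\,\A(\ell)$, which depends on $\ell$ only through $\A$. Because $\A(\ell)>0$, maximising $\eff$ is equivalent to maximising $f(\ell):=\log\eff(\ell)=\log 2+2\log\ell+\log\A(\ell)$, and as $\A$ is positive and $C^1$, the function $f$ is $C^1$ on $(0,\infty)$. Everything then reduces to studying the single function $f'(\ell)=\frac{2}{\ell}+(\log\A)'(\ell)$.

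For uniqueness I would show $f'$ is \emph{strictly} decreasing: the term $2/\ell$ is strictly decreasing, while log-concavity of $\A$ says exactly that $(\log\A)'$ is non-increasing, and a strictly decreasing plus a non-increasing function is strictly decreasing. Hence $f$ is strictly concave and has at most one critical point, which is then automatically its unique global maximiser. For existence I would check the endpoints. As $\ell\to0^+$, monotonicity of $(\log\A)'$ bounds it below by its value at any fixed reference point while $2/\ell\to+\infty$, so $f'(\ell)\to+\infty$ and $f'>0$ near $0$. As $\ell\to\infty$, non-increasingness of $\A$ gives $(\log\A)'\le 0$; since $\A$ genuinely decreases we have $(\log\A)'(\ell_0)=-\delta<0$ at some $\ell_0$, and monotonicity propagates this to $(\log\A)'(\ell)\le-\delta$ for all $\ell\ge\ell_0$, so $f'(\ell)\le \frac{2}{\ell}-\delta<0$ once $\ell>2/\delta$. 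By the intermediate value theorem and strict monotonicity, $f'$ has a unique zero $\ell_*$, with $f'>0$ on $(0,\ell_*)$ and $f'<0$ on $(\ell_*,\infty)$; thus $\ell_*$ is the unique maximiser of $\eff$.

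The main obstacle is not the interior but the two endpoints, and both are handled by the one structural fact that log-concavity makes $(\log\A)'$ monotone: near $0$ this prevents the log-derivative from diverging downward faster than $2/\ell$ diverges upward, and near $\infty$ it upgrades a single instance of strict decrease into an eventual uniform negative bound (which incidentally forces $\A(\ell)\to0$ and rules out a runaway $\eff\to\infty$). The only genuinely excluded case is $\A$ constant, where $\eff=2c\,\ell^2$ has no maximiser; I would treat ``non-increasing'' as genuine decrease, consistent with $\A$ being a spacing-dependent acceptance rate.
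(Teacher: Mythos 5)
Your proof is correct and takes essentially the same route as the paper: both reduce the problem to the critical-point equation $2/\ell = -(\log \A)'(\ell)$ and use log-concavity to play the monotone function $-(\log\A)'$ against the strictly decreasing $2/\ell$, obtaining uniqueness from strict monotonicity of the difference and existence from the endpoint behaviour. If anything you are slightly more careful than the paper at the endpoints: you explicitly note that a constant $\A$ (which ``non-increasing'' permits) gives $\eff(\ell) \propto \ell^2$ with no maximiser, an edge case that the paper's phrase ``non-decreasing from a finite value'' silently glosses over, and you correctly repair it by requiring genuine decrease somewhere, which log-concavity then upgrades to a uniform negative bound on $(\log\A)'$ at infinity.
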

\begin{proof}
    Let $f(\ell) = \log \A(\ell)$, so that $\eff(\ell) = \ell^2 \A(\ell) = \ell^2 e^{f(\ell)}$.  Then any stationary point of $\eff(\ell)$ must have $\eff'(\ell)=0$, so that
    $$
  2\ell e^{f(\ell )} + \ell^2 f'(\ell ) e^{f(\ell )} \ = \ 0 \, .
    $$
    Eliminating the point $\ell = 0$ which is clearly a minimum, we need to satisfy
    $$  -f'(\ell) = 2/\ell \ .
    $$
    Now, the right-hand side of this equation is strictly decreasing from $\infty $ to $0$, and the left-hand side is non-decreasing from a finite value.  Hence, since both functions are continuous, there must exist a unique stationary point $\ell_* > 0$. Since $\eff(\ell )$ is non-negative and $\eff(0)=0$, it follows that  $\ell_*$ is a maximum as required. 
\end{proof}
For example, suppose that $\A(\ell)=2 \Phi (-c\ell /2)$ for the cumulative normal distribution function $\Phi$, it is easy to check that 
$\A(\ell)$ is log-concave.  We will see the relevance of this case in the next section.

\begin{proposition}
If $\B(\ell) \equiv 0$,
and $\A(\ell)$ is continuous,
and $\lim_{\ell \searrow 0} {\log[1-\A(\ell)] \over \log(\ell)} < 2$,
and $\lim_{\ell\to\infty} \ell^2 \A(\ell) = 0$,
then there exists $\ell_* \in (0,\infty)$ such that
% the efficiency function
$\eff(\ell)$ is maximised at $\ell=\ell_*$.
\end{proposition}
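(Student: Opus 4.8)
The plan is to use the special form taken by the efficiency function when $\B\equiv 0$, and to show that $\eff$ vanishes at both endpoints of $(0,\infty)$, so that its maximum must be attained in the interior.

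First I would simplify $\eff$. Since $\B\equiv0$ and $\A+\B+\C=1$, we have $\C(\ell)=1-\A(\ell)$, and by~(\ref{volnonrev}) the volatility reduces to $v(\ell)=\A(\ell)/(1-\A(\ell))$, so that
$$
\eff(\ell) \ = \ \ell^2\,\frac{\A(\ell)}{1-\A(\ell)} \, .
$$
Because $\C(\ell)=1-\A(\ell)>0$ and $\A$ is continuous, $\eff$ is continuous and non-negative on $(0,\infty)$.

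Next I would examine the two boundary limits. As $\ell\to\infty$, the hypothesis $\ell^2\A(\ell)\to0$ forces $\A(\ell)\to0$ (otherwise $\ell^2\A(\ell)$ would be unbounded along a subsequence), so $1-\A(\ell)\to1$ and hence $\eff(\ell)=\ell^2\A(\ell)\cdot(1-\A(\ell))^{-1}\to0$. The limit $\ell\searrow0$ is more delicate, and this is where I expect the main difficulty: there $\A(\ell)$ may tend to $1$, making the factor $1/(1-\A(\ell))$ blow up, so one must verify that the $\ell^2$ factor wins. Using $\log\A(\ell)\le0$,
$$
\log\eff(\ell) \ \le \ 2\log\ell - \log[1-\A(\ell)] \ = \ \log\ell\left(2 - \frac{\log[1-\A(\ell)]}{\log\ell}\right) \, .
$$
As $\ell\searrow0$ we have $\log\ell\to-\infty$, while the hypothesis $\lim_{\ell\searrow0}\log[1-\A(\ell)]/\log\ell = L<2$ makes the parenthesized factor converge to $2-L>0$; hence the product tends to $-\infty$ and $\eff(\ell)\to0$.

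Finally I would conclude by a standard coercivity argument. Since $\eff$ is continuous and non-negative on $(0,\infty)$ and tends to $0$ at both ends, if $\eff\not\equiv0$ its supremum $S>0$ is attained: one can pick $0<a<b<\infty$ with $\eff\le S/2$ outside $[a,b]$, and continuity on the compact interval $[a,b]$ yields a maximizer $\ell_*\in(0,\infty)$ (and if $\eff\equiv0$ any $\ell_*$ works). The only genuine obstacle is the $\ell\searrow0$ estimate, where the strict inequality $L<2$ is precisely what forces the $\ell^2$ factor to dominate the blow-up of $1/(1-\A)$; were $L>2$ instead, $\eff$ would diverge as $\ell\searrow0$ and no finite maximizer could exist.
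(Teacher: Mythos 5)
Your proof is correct and takes essentially the same route as the paper's: write $\eff(\ell)=\ell^2\,\A(\ell)/(1-\A(\ell))$, show it vanishes at both ends of $(0,\infty)$ while being continuous, and conclude via the Extreme Value Theorem on a compact subinterval. If anything, your logarithmic estimate as $\ell \searrow 0$ is more rigorous than the paper's informal first-order substitution $1-\A(\ell)=\ell^{\eta}$, and your explicit handling of the compactification step and of the degenerate case $\eff\equiv 0$ tightens details the paper leaves implicit.
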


\begin{proof}
The assumptions imply that to first order
as $\ell \searrow 0$,
$1-\A(\ell) = \ell^{\eta}$ for some
$\eta<2$,
i.e.\ $\A(\ell) = 1 - \ell^{\eta}$.
Then
$\eff(\ell)
= \ell^2 \, \A(\ell)/(1-\A(\ell))
= \ell^2 \, (1 - \ell^{\eta}) / \ell^{\eta}
= \ell^{2-\eta} - \ell^2$.
Since $\eta<2$, this implies that
$\eff(\ell)>0$ for all small positive $\ell$.
However,
$\lim_{\ell\to\infty} \eff(\ell)
= \lim_{\ell\to\infty} \ell^2 \, \A(\ell)/(1-\A(\ell))
\le \lim_{\ell\to\infty} \ell^2 \, \A(\ell)
= 0$ by assumption.
Hence, by continuity and the Extreme Value Theorem,
$\eff(\ell)$ must take its maximum at some $\ell_*\in(0,\infty)$.
\end{proof}

However, the real value of these rescaling operations
is to optimise MCMC algorithms like tempering, as we now discuss.

\section{Application to Tempering Algorithms}
\label{sec-temper}

Tempering algorithms,
including simulated tempering~\cite{marinari}
and parallel tempering~\cite{mcmcmc},
 are now widely used to improve MCMC by allowing
mixing between modes.  They involve specifying a sequence of
temperature values which increase from one (corresponding to the original
``coldest'' distribution) to some fixed large value (corresponding to a
``hottest'' distribution which facilitates easy mixing between modes).

Typically, we define inverse temperatures
$0 \leq \beta_N<\beta_{N-1}<\ldots <\beta_1<\beta_0=1$,
and let $\pi_\beta(x)\propto \left[\pi(x)\right]^\beta$
be a power of the target density $\pi(x)$.
Simulated Tempering (ST) augments the original state space with a
one-dimensional component indicating the current inverse temperature level,
thus creating a $(d+1)$-dimensional chain with target
$\pi(\beta,x)\propto K(\beta)\pi(x)^{\beta}$,
where ideally $K(\beta)=\left[\int_{x}\pi(x)^{\beta}\mbox{d}x\right]^{-1}$
so that $\beta$ has uniform marginal.
By contrast, Parallel Tempering (PT)
runs a chain on $N$ copies of the state space,
each at a different temperature, with target
$\pi_N(x_0,x_1,\ld,x_N) \propto
\pi_{\beta_0}(x_0) \, \pi_{\beta_1}(x_1) \ld \pi_{\beta_N}(x_N)$.
Each algorithm attempts to use the hotter temperatures to help the
chain move between modes, and thus better sample the original cold
temperature target $\pi(x) = \pi_0(x)$.

For tempering algorithms to be useful, they have to move fairly
efficiently between the extreme temperatures.  In particular, the rate of
temperature round-trips from coldest to hottest to coldest is often
 a good measure of a tempering algorithm's efficiency \cite{syed}.
To study this, we use the theoretical framework developed in
\cite{atchade,sectemper}.
This framework considers tempering within a single mode of the
target distribution, such
that spatial mixing is very easy and can be considered to happen immediately.
(This corresponds to the ``Efficient Local Exploration'' (ELE)
assumption in~\cite{syed}.)
Furthermore, it assumes the same product i.i.d.\ structure as
for theoretical efficiency study of random-walk Metropolis
algorithms as in~\cite{RGG,statsci}.

% To optimise this, we need to consider specific forms of the acceptance rate
% $\acc := \acc(\ell)$, and find the maximum of the efficiency
% $\eff(\ell) := \ell^2 \, \acc(\ell) / (1{-}\acc(\ell))$.

It is known
for tempering algorithms~\cite{atchade,sectemper},
like for random-walk Metropolis (RWM) algorithms~\cite{RGG,statsci},
that under these strong assumptions,
in the limit as the dimension $d\to\infty$,
proposed moves at scaling $\ell$
are accepted with asymptotic probability
$2 \, \Phi(-c \, \ell / 2)$ for a
specific problem-dependent constant $c>0$
(here $c=\sqrt{I}$ in the notation of \cite{atchade,sectemper}),
where
$\Phi(z) = \int_{-\infty}^z {1 \over \sqrt{2\pi}} \, e^{-u^2/2} \, du$
is the cumulative distribution fuction (cdf)
of the standard normal distribution,
with inverse function $\Phi^{-1}$.

In the usual reversible version of tempering,
the proposed moves would be to increase or decrease the temperature index
by $1$, with probability 1/2 each.
In our context, this implies that
$\A(\ell)=\B(\ell) = (1/2) [2 \, \Phi(-c \, \ell / 2)]
= \Phi(-c \, \ell / 2)$.
Hence, the overall proposal acceptance rate
then becomes
$\acc(\ell) \equiv \A(\ell)+\B(\ell) = 2 \, \Phi(-c \, \ell / 2)$,
as discussed in \cite{atchade,sectemper}.

By contrast, the non-reversible momentum version of tempering
would always propose to increase the temperature index by~1
on $\IZ \times \{+\}$,
or decrease by~1 on $\IZ \times \{-\}$).
This corresponds to
$\B(\ell) \equiv 0$
and $\acc(\ell) = \A(\ell)
= (1) \, 2 \, \Phi(-c \, \ell / 2)
= 2 \, \Phi(-c \, \ell / 2)$.

We now derive various results about the relationship between
efficiency and acceptance rate for the reversible and non-reversible
versions of these tempering algorithms under these assumptions,
as illustrated in Figure~2 (for the case $c=1$).
Note that we measure relative efficiency here in terms of the volatility
of the limiting diffusion as justified by
\cite[Theorem~1]{sectemper} as discussed above.
However, we will see in Section~\ref{sec-sim} below that
simulated temperature round-trip rates do indeed follow
these relative efficiency curves very closely.

\begin{theorem}\label{curvethm}
Consider a tempering algorithm under the assumptions of
\cite{atchade,sectemper} as above.
Then in the limit as the dimension $d\to\infty$,
the efficiency measure $\eff(\ell)$ is related
to the acceptance rate $\acc(\ell)$ as follows:
\hfil\break (i) In the reversible case,
$\eff(\ell) = \acc(\ell) {4 \over c^2}
		\left[ \Phi^{-1}(\acc(\ell)/2)) \right]^2$.
\hfil\break (ii) In the non-reversible case,
$\eff(\ell) = {\acc(\ell) \over 1-\acc(\ell)} {4 \over c^2}
		\left[ \Phi^{-1}(\acc(\ell)/2)) \right]^2$.
\end{theorem}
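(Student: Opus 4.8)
The plan is to combine the volatility formula from Theorem~\ref{brownianthm} with the explicit forms of $\A(\ell)$ and $\B(\ell)$ established just above, and then to eliminate the scaling parameter $\ell$ in favour of $\acc(\ell)$. Recall that the efficiency function is $\eff(\ell) = \ell^2\, v(\ell)$, so the whole task reduces to (a) computing $v(\ell)$ in each case, and (b) re-expressing $\ell^2$ through the acceptance rate.

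First I would handle the reversible case. Here $\A(\ell) = \B(\ell) = \Phi(-c\ell/2)$, so the cross term $(\A-\B)^2/\C$ vanishes and formula~\eqref{volrev} gives $v(\ell) = \A(\ell)+\B(\ell) = 2\,\Phi(-c\ell/2) = \acc(\ell)$. Thus $\eff(\ell) = \ell^2\,\acc(\ell)$, and it remains only to write $\ell^2$ in terms of $\acc(\ell)$. For the non-reversible case I would instead use $\B(\ell)\equiv 0$ and $\A(\ell)=2\,\Phi(-c\ell/2)=\acc(\ell)$, so that formula~\eqref{volnonrev} gives $v(\ell) = \A(\ell)/(1-\A(\ell)) = \acc(\ell)/(1-\acc(\ell))$ and hence $\eff(\ell) = \ell^2\,\acc(\ell)/(1-\acc(\ell))$.

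The common final step is the inversion of the acceptance-rate identity. In both cases $\acc(\ell)=2\,\Phi(-c\ell/2)$, hence $\Phi^{-1}(\acc(\ell)/2) = -c\ell/2$, giving $\ell = -\tfrac{2}{c}\,\Phi^{-1}(\acc(\ell)/2)$ and therefore $\ell^2 = \tfrac{4}{c^2}\,[\Phi^{-1}(\acc(\ell)/2)]^2$. Substituting this expression for $\ell^2$ into the two displays above yields exactly (i) and (ii).

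I do not expect any genuine obstacle here: Theorem~\ref{brownianthm} has already supplied the volatility, and the special-case simplifications~\eqref{volrev} and~\eqref{volnonrev} have already collapsed $v(\ell)$ to a clean form, so the result is essentially just a change of variables from $\ell$ to $\acc(\ell)$. The only point demanding slight care is that $\acc(\ell)/2 = \Phi(-c\ell/2) \in (0,1/2)$, so $\Phi^{-1}(\acc(\ell)/2)$ is well-defined and negative; squaring it then removes the sign cleanly, which is why the efficiency depends only on $[\Phi^{-1}(\acc(\ell)/2)]^2$.
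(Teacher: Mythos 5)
Your proposal is correct and follows essentially the same route as the paper's proof: apply the special-case volatility formulas \eqref{volrev} and \eqref{volnonrev} to get $v(\ell)=\acc(\ell)$ and $v(\ell)=\acc(\ell)/(1-\acc(\ell))$ respectively, then invert $\acc(\ell)=2\,\Phi(-c\ell/2)$ to write $\ell^2 = \tfrac{4}{c^2}[\Phi^{-1}(\acc(\ell)/2)]^2$ and substitute. Your added remark about the sign of $\Phi^{-1}(\acc(\ell)/2)$ is a minor clarification the paper leaves implicit.
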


\begin{proof}
In the reversible case, by~\eqref{volrev},
we have
$v(\ell) = 2 \, A(\ell) = \acc(\ell)$, whence
$\eff(\ell) = \ell^2 \, v(\ell) = \ell^2 \, \acc(\ell)$.

In the non-reversible case, by~\eqref{volnonrev},
we have
$v(\ell) = {A(\ell) \over 1-A(\ell)} = {\acc(\ell) \over 1-\acc(\ell)}$,
whence $\eff(\ell) = \ell^2 \, v(\ell) = \ell^2 \,
{\acc(\ell) \over 1-\acc(\ell)}$.

In either case, we have $\acc(\ell) = 2 \, \Phi(-c \, \ell / 2)$.
Inverting this,
$\ell = -{2 \over c} \, \Phi^{-1}(\acc(\ell)/2)$.
Plugging this formula into the expressions for $\eff(\ell)$,
the two formulae follow.
\end{proof}

% \graphit{effcurves}
% \graphit{effplot}
\bigskip
\bigskip
\graphit{effcropped}
% Rlifted
% Rnon
\vskip -0.8cm
$\big.$
\begin{quote}
\baselineskip=12pt
{\bf Figure~2:}\
Efficiency curves for non-reversible (top, green)
and reversible (bottom, blue) algorithms
from Theorem~\ref{curvethm}
when $c{=}1$,
% together
with their
optimal scaling values (dashed lines)
from Theorem~\ref{optthm},
including the 1.89/1.33 ratio
from Corollary~\ref{corfortytwo},
and the infinity-tending ratio towards the right (red)
from Corollary~\ref{infcor}.
\end{quote}
\bigskip

% Applying our above results to these cases yields:
We can then maximise the efficiency curves from
Theorem~\ref{curvethm}, as follows:

\begin{theorem}\label{optthm}
Consider the tempering algorithm described above, under the strong
assumptions from \cite{atchade,sectemper}.
Then:
\hfil\break {\bf (i)}
\cite{atchade,sectemper} \
In the reversible case where
$\A(\ell)=\B(\ell) = \Phi(-c \, \ell / 2)$,
the efficiency fuction $\eff(\ell)$ is maximised
(to three decimal points)
by choosing $\ell = \ell_* \doteq 2.38 / c$,
whence $\A(\ell_*) \doteq 0.117$
and $\acc(\ell_*) = 2 \, \A(\ell_*) \doteq 0.234$
and $\eff(\ell_*) \doteq 1.33 / c^2$.
By contrast:
\hfil\break {\bf (ii)} In the non-reversible case where $\B(\ell) \equiv 0$
and $\A(\ell) = 2 \, \Phi(-c \, \ell / 2)$,
the efficiency function $\eff(\ell)$ is maximised
by choosing $\ell = \ell_{**} \doteq 1.73 / c$,
whence $\acc(\ell_{**}) \doteq 0.387$
and $\eff(\ell_{**}) \doteq 1.89 / c^2$.
\end{theorem}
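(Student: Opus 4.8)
The plan is to treat parts {\bf (i)} and {\bf (ii)} as one-dimensional calculus optimisations of the explicit efficiency functions already in hand, reducing each to a transcendental stationarity condition that is then solved numerically. Writing $\phi = \Phi'$ for the standard normal density, I would first record the two closed forms: in the reversible case the volatility formula~\eqref{volrev} gives $\eff(\ell) = \ell^2\,v(\ell) = 2\,\ell^2\,\Phi(-c\ell/2)$, while in the non-reversible case~\eqref{volnonrev} gives $\eff(\ell) = \ell^2\,v(\ell) = \ell^2\,\frac{2\,\Phi(-c\ell/2)}{1-2\,\Phi(-c\ell/2)}$. In both cases I would substitute $u = c\ell/2$ (so $\ell = 2u/c$), which factors the dependence on $c$ out of the problem: each efficiency becomes $(4/c^2)$ times a fixed function of $u>0$. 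This already explains the reported scalings $\ell \propto 1/c$ and $\eff \propto 1/c^2$, and reduces both parts to maximising a universal function of $u$.

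Next I would differentiate. Setting $\eff'(\ell)=0$ and discarding the trivial root $\ell=0$ leaves the stationarity condition $2\,v(\ell) + \ell\,v'(\ell) = 0$, i.e.\ $-\ell\,(\log v)'(\ell) = 2$. Carried out in the $u$ variable, the reversible case collapses to
$$
2\,\Phi(-u)\;=\;u\,\phi(u)\,,
$$
and the non-reversible case to
$$
2\,\Phi(-u)\,\bigl(1-2\,\Phi(-u)\bigr)\;=\;u\,\phi(u)\,.
$$
Neither equation has a closed form, so the stated constants are necessarily numerical. Solving the first gives $u_*\doteq 1.19$, hence $\ell_* = 2u_*/c \doteq 2.38/c$, from which $\A(\ell_*)=\Phi(-u_*)\doteq 0.117$, $\acc(\ell_*)=2\,\A(\ell_*)\doteq 0.234$, and $\eff(\ell_*)=\ell_*^2\,\acc(\ell_*)\doteq 1.33/c^2$. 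Solving the second gives $u_{**}\doteq 0.865$, hence $\ell_{**}=2u_{**}/c\doteq 1.73/c$, $\acc(\ell_{**})=2\,\Phi(-u_{**})\doteq 0.387$, and $\eff(\ell_{**})\doteq 1.89/c^2$.

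It then remains to certify that each stationary point really is the global maximiser. For the reversible case this is immediate: the relevant volatility $v(\ell)=2\,\Phi(-c\ell/2)$ is positive, $C^1$, non-increasing and log-concave (the log-concavity being exactly the example noted after the first Proposition), so that Proposition applies with its generic function taken to be $v$, yielding a unique maximiser. For the non-reversible case I would first invoke the second Proposition for existence, after checking its hypotheses for $\A(\ell)=2\,\Phi(-c\ell/2)$: near $\ell=0$ one has $1-\A(\ell)\sim (c/\sqrt{2\pi})\,\ell$, so its limiting exponent equals $1<2$, and $\ell^2\A(\ell)\to 0$ as $\ell\to\infty$ since $\Phi$ decays super-polynomially. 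For uniqueness I would show $\eff$ is unimodal: since $\eff(\ell)\to 0$ at both ends of $(0,\infty)$ and $\eff>0$ inside, it suffices that the stationarity equation have a single root, which follows if $\eff'$ is strictly decreasing through every zero. Writing $T(u) = \frac{\A(1-\A)}{\phi(u)} - u$ with $\A = 2\,\Phi(-u)$, so that a critical point is a zero of $T$, a direct computation gives $T'(u) = -3 + 4\,\A + u^2$ at any such zero, so the downcrossing condition reduces to the inequality $8\,\Phi(-u) + u^2 < 3$ holding there (it gives $2.30$ at $u_{**}$), which one verifies; every zero then being a strict downcrossing forces a single root.

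I expect the \emph{main obstacle} to be precisely this last step, establishing uniqueness of the maximiser in the non-reversible case. The natural log-concavity route is genuinely unavailable there: one checks that $\log v$ is in fact locally convex near $u_{**}$, so the first Proposition cannot be applied, and unimodality must instead come from the sign analysis of $T$ sketched above (or from a careful numerical bracketing of the single root). Everything else—the differentiation, the substitution $u=c\ell/2$, and the evaluation of $\acc$ and $\eff$ at the two optima—is routine once the two transcendental equations are in hand.
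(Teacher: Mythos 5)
Your proposal is correct and, at its core, follows the same route as the paper's proof: both substitute $s=c\ell/2$ (your $u$) to factor the constant $c$ out of the problem, reduce part (i) to maximising $s^2\,\Phi(-s)$ and part (ii) to maximising $s^2\,\Phi(-s)/[1-2\Phi(-s)]$, solve numerically ($s_*\doteq 1.1906$, $s_{**}\doteq 0.8643$), and then read off $\acc$ and $\eff$ at the optima exactly as you do. Where you differ is in rigour beyond the paper's own standard: the paper simply asserts ``numerically, the latter is maximised at\ldots'' without writing stationarity equations or addressing uniqueness, whereas you derive the two transcendental conditions $2\Phi(-u)=u\phi(u)$ and $2\Phi(-u)\bigl(1-2\Phi(-u)\bigr)=u\phi(u)$ (both correct), invoke the two Propositions of Section~\ref{sec-rescale} for existence, and add a downcrossing analysis for uniqueness in the non-reversible case; your computation that $T'(u)=-3+4\A+u^2$ at any zero of $T$ checks out. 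The one soft spot is in that last step: the downcrossing argument requires $8\Phi(-u)+u^2<3$ at \emph{every} zero of $T$, not just at the numerically located root $u_{**}$, and since this inequality fails both near $u=0$ and for large $u$, you would additionally need to confine all zeros of $T$ to the window where it holds (e.g.\ by checking the sign of $T$ on $(0,0.4]$ and $[1.6,\infty)$, after which one can verify $8\Phi(-u)+u^2<3$ throughout $[0.4,1.6]$). This is fixable, and in any case it is more than the paper itself attempts, so it is not a gap relative to the published proof.
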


\begin{proof}
In the reversible case,
we need to choose $\ell$ to maximise
$$
\eff(\ell)
\ = \ \ell^2 v(\ell)
\ = \ \ell^2 \, 2 \, \A(\ell)
\ = \ 2 \, \ell^2 \, \Phi(-c \, \ell / 2)
\, .
$$
Letting $s=c\ell/2$, this is equivalent to maximising $s^2 \, \Phi(-s)$.
Numerically, the latter is maxmised at $s = s_* \doteq 1.1906$,
corresponding to $\ell = \ell_* = 2 s_* / c \doteq 2.3812 / c$, whence
$$
\eff(\ell_*)
\ = \ 2 \ell_*^2 \, \Phi( - s_* )
\ \doteq \ 2(2.3812)^2 c^{-2} \, \Phi( - 1.1906 )
\ \doteq \ 1.3257 / c^2
\, .
$$
Then $\A(\ell_*) = \Phi(-s_*) \doteq
% \Phi( - 2 c (1.1906) / 2c ) =
\Phi(-1.1906) \doteq 0.117$.
The corresponding optimal
acceptance rate is thus $\A(\ell_*)+\B(\ell_*) = 2 \, \A(\ell_*) \doteq 0.234$,
just as for random-walk Metropolis algorithms~\cite{RGG,statsci}.

% m = function(s) { s^2 * pnorm(-s) }
% optimise(m, interval=c(0,50), maximum=TRUE)
% 2*2.3812^2*pnorm(-1.1906)

By contrast, in the non-reversible case,
we need to choose $\ell$ to maximise
$$
\eff(\ell) = \ell^2 v(\ell)
= \ell^2 \, \A(\ell) / (1-\A(\ell))
= \ell^2 \, 2 \, \Phi(-c \, \ell / 2) / [1 - 2 \, \Phi(-c \, \ell / 2)]
\, .
$$
Letting $s=c\ell/2$, this is equivalent to maximising
$s^2 \, \Phi(-s) / [1-2\Phi(-s)]$.
Numerically, the latter is maxmised at $s = s_{**} \doteq 0.8643$,
corresponding to $\ell = \ell_{**} = 2 s_{**}/c \doteq 1.7285 / c$, whence
$$
\eff(\ell_{**})
\ = \ \ell_{**}^2 \, v(\ell_{**})
\ = \ \ell_{**}^2 \, 2 \, \Phi(-c \, \ell_{**} / 2)
			/ [1 - 2 \, \Phi(-c \, \ell_{**} / 2)]
$$
$$
\ \doteq \ (1.7285)^2 \, 2 \, c^{-2} \, \Phi( - 0.8643 ) / [1-\Phi(-0.8643)]
\ \doteq \ 1.8896 / c^2
\, .
$$
Then
$\acc(\ell_{**}) =
2 \, \A(\ell_{**}) \doteq
2 \, \Phi( - c \ell_{**} / 2 )
= 2 \, \Phi( - c (1.7285) / 2c ) = 2 \, \Phi(-0.8642) \doteq 0.387$,
as claimed.
\end{proof}

% mm = function(s) { s^2 * pnorm(-s) / (1-2*pnorm(-s)) }
% optimise(mm, interval=c(0,50), maximum=TRUE)
% 1.7285^2 * 2 * pnorm(-0.8643) / (1 - 2 * pnorm(-0.8643))

% $\ell = \ell_* \doteq 1.73 \, c^{-1/2}$
% and $\acc(\ell_*) \doteq 0.387$,
% and $f(\ell_*) \doteq 1.890$.
% 
% this $f(\ell)$,
% we need to choose
% $\ell = \ell_* \doteq 1.73 \, c^{-1/2}$
% and $\acc(\ell_*) \doteq 0.387$,
% and $f(\ell_*) \doteq 1.890$.
% 
% By contrast, in the reversible case with $A=B$,
% we need to find the maximum of the efficiency
% $f(\ell) := \ell^2 \, \acc(\ell)$,
% for which $\ell_* \doteq 2.38 \, I^{-1/2}$
% and $\acc(\ell_*) \doteq 0.234$,
% and $f(\ell_*) \doteq 1.326$.

Theorem~\ref{optthm} provides some practical guidance when running
tempering algorithms.
In the reversible case, the temperatures should be spaced so that
the acceptance rate of adjacent moves or swaps is approximately
0.234 just like for random-walk Metropolis
algorithms~\cite{RGG,statsci},
as derived in~\cite{atchade,sectemper}.
By contrast,
in the non-reversible case, the temperatures should be spaced so that
the acceptance rate of adjacent moves or swaps is approximately 0.387.

Now, the ratio of optimal $\ell$ values is $1.73/2.38 \doteq 0.73$,
corresponding to a 27\% decrease in proposal scaling standard deviation
for the non-reversible versus reversible case.
More importantly,
the ratio of optimal efficiency functions is $1.89 / 1.33 \doteq 1.42$,
corresponding to a 42\% increase in efficiency
for the non-reversible versus reversible case.
We record this formally as:

\begin{corollary}
\label{corfortytwo}
For the tempering algorithms as above,
the maximum efficiency for the non-reversible algorithm
is approximately 42\% more efficient than the reversible algorithm.
\end{corollary}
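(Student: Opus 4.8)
The plan is to read off the corollary directly as a numerical consequence of Theorem~\ref{optthm}, since that theorem already supplies the two optimal efficiency values. Having established that $\eff(\ell_*) \doteq 1.33/c^2$ in the reversible case (part (i)) and $\eff(\ell_{**}) \doteq 1.89/c^2$ in the non-reversible case (part (ii)), the entire content of the corollary is the observation that the ratio of these maxima is independent of the problem-dependent constant $c$, because the common factor $1/c^2$ cancels. First I would form the quotient
\begin{equation*}
{\eff(\ell_{**}) \over \eff(\ell_*)} \ = \ {1.89/c^2 \over 1.33/c^2} \ = \ {1.89 \over 1.33} \, ,
\end{equation*}
and then note that this evaluates to approximately $1.42$, i.e.\ a 42\% increase, which is precisely the claim.

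A cleaner version of the argument, which avoids relying on the already-rounded decimal values, would carry the unrounded constants from the numerical optimisations through the ratio. That is, I would write $\eff(\ell_*) = 2\, \ell_*^2 \, \Phi(-s_*)$ with $s_* \doteq 1.1906$ and $\eff(\ell_{**}) = \ell_{**}^2 \, 2\, \Phi(-s_{**})/[1-2\Phi(-s_{**})]$ with $s_{**} \doteq 0.8643$, and substitute $\ell_* = 2s_*/c$ and $\ell_{**} = 2s_{**}/c$. Both efficiencies then scale as $c^{-2}$, so the ratio reduces to
\begin{equation*}
{s_{**}^2 \, \Phi(-s_{**}) \, / \, [1-2\Phi(-s_{**})] \over s_*^2 \, \Phi(-s_*)} \, ,
\end{equation*}
a pure number with no dependence on $c$, and I would verify numerically that this equals about $1.42$. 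This formulation makes transparent the key structural point: the speed-up factor is a universal constant across all problems to which the framework applies.

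There is essentially no obstacle here, since the corollary is a direct numerical restatement of conclusions already proved in Theorem~\ref{optthm}. The only point requiring any care is the cancellation of $c$: I would emphasise that because both optimal efficiencies are of the form (constant)$/c^2$, the ratio is genuinely problem-independent, so the 42\% figure is not an artefact of a particular choice of $c$ but holds uniformly. If one wished to be fully rigorous about the word ``approximately,'' one could note that the $1.42$ figure is obtained by evaluating the two one-dimensional maximisation problems $\max_s s^2\Phi(-s)$ and $\max_s s^2\Phi(-s)/[1-2\Phi(-s)]$ numerically to sufficient precision; the existence and uniqueness of these maxima is already guaranteed by the Propositions in Section~\ref{sec-rescale}, so only the numerical evaluation remains, and that is routine.
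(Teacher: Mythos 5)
Your proposal is correct and matches the paper's own reasoning: the corollary is recorded in the paper immediately after the text computes the ratio of the optimal efficiencies from Theorem~\ref{optthm}, namely $1.89/1.33 \doteq 1.42$, with the problem-dependent factor $1/c^2$ cancelling exactly as you observe. Your refinement of carrying the unrounded constants $s_* \doteq 1.1906$ and $s_{**} \doteq 0.8643$ through the ratio is a harmless strengthening of the same argument, not a different route.
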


This corollary indicates that, when scaled with the optimal
choice of parameter $\ell$, the non-reversible case is indeed
more efficient than the reversible case, but not massively so.

% \begin{center}
% \fbox{ \includegraphics[height=2.5in]{liftexdiag} }
% \par
% {\bf Figure~1: Diagram of the double-birth-death Markov chain.}
% \end{center}

We also have:

\begin{corollary}\label{infcor}
Under the above assumptions, as $\ell \searrow 0$
(corresponding to smaller and smaller temperature spacings),
the acceptance $\acc(\ell) \nearrow 1$,
and the ratio of efficiency
of non-reversible tempering
to reversible tempering converges to infinity.
\end{corollary}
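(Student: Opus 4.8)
The plan is to reduce everything to the single identity $\acc(\ell) = 2\,\Phi(-c\,\ell/2)$, which holds verbatim in both the reversible and the non-reversible cases, together with the two efficiency formulae from Theorem~\ref{curvethm}.

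First I would establish the claim $\acc(\ell)\nearrow 1$. Since $\acc(\ell)=2\,\Phi(-c\,\ell/2)$ and $\Phi$ is continuous and strictly increasing with $\Phi(0)=\half$, as $\ell\searrow 0$ the argument $-c\,\ell/2$ increases monotonically to $0$, so $\Phi(-c\,\ell/2)$ increases monotonically to $\half$, giving $\acc(\ell)\nearrow 2\cdot\half=1$. In particular $\acc(\ell)<1$ strictly for every $\ell>0$, which is exactly what keeps the non-reversible volatility $v(\ell)=\acc(\ell)/[1-\acc(\ell)]$ finite and positive, so that the ratio below is well defined.

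Next I would form the efficiency ratio directly. By Theorem~\ref{curvethm}, both efficiency functions share the common factor $\frac{4}{c^2}\big[\Phi^{-1}(\acc(\ell)/2)\big]^2$, and the only difference is the prefactor $\acc(\ell)$ in the reversible case versus $\acc(\ell)/[1-\acc(\ell)]$ in the non-reversible case. Dividing the non-reversible efficiency by the reversible one, the common factor together with the shared $\acc(\ell)$ cancels, leaving
$$
\frac{\eff_{\rm nonrev}(\ell)}{\eff_{\rm rev}(\ell)} \ = \ \frac{1}{1-\acc(\ell)}\, .
$$
The same ratio follows equally from the unrescaled forms $\eff_{\rm rev}(\ell)=\ell^2\,\acc(\ell)$ and $\eff_{\rm nonrev}(\ell)=\ell^2\,\acc(\ell)/[1-\acc(\ell)]$, since there the factor $\ell^2\,\acc(\ell)$ cancels. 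Combining this identity with the first step, as $\ell\searrow0$ we have $1-\acc(\ell)\searrow 0$ and hence $1/[1-\acc(\ell)]\to\infty$, which is the claimed divergence of the efficiency ratio.

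I do not anticipate a genuine obstacle here; the computation is short once the cancellation is spotted. The only point requiring care is confirming that the unwieldy $\Phi^{-1}$ factor is \emph{identical} in the two cases, so that it cancels exactly rather than merely staying bounded. This is immediate from the shared relation $\acc(\ell)=2\,\Phi(-c\,\ell/2)$, which makes that inverse-cdf factor a function of $\acc(\ell)$ alone and independent of the reversibility regime, so the divergence is driven entirely by the $1/[1-\acc(\ell)]$ term.
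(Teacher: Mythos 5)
Your proposal is correct and follows essentially the same route as the paper: both derive the ratio of the two efficiency formulae from Theorem~\ref{curvethm}, cancel the common factor $\acc(\ell)\,\frac{4}{c^2}\bigl[\Phi^{-1}(\acc(\ell)/2)\bigr]^2$ to obtain $1/[1-\acc(\ell)]$, and then use $\acc(\ell)=2\,\Phi(-c\,\ell/2)\nearrow 1$ as $\ell\searrow 0$ to conclude divergence. Your added remarks (monotonicity giving strictness $\acc(\ell)<1$, and the equivalent cancellation in the unrescaled forms $\ell^2\acc(\ell)$ versus $\ell^2\acc(\ell)/[1-\acc(\ell)]$) are harmless elaborations of the same argument, not a different method.
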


\begin{proof}
% Note that, as a general function of the scaling parameter $\ell$,
By Theorem~\ref{curvethm},
the ratio of efficiency measures for non-reversible versus
reversible tempering is given by
$$
{
\acc(\ell) {4 \over c^2}
		\left[ \Phi^{-1}(\acc(\ell)/2)) \right]^2
\over
{\acc(\ell) \over 1-\acc(\ell)} {4 \over c^2}
		\left[ \Phi^{-1}(\acc(\ell)/2)) \right]^2
% \ell^2 \, 2 \, \Phi(-c \, \ell / 2) / [1 - 2 \, \Phi(-c \, \ell / 2)]
% 2 \, \ell^2 \, \Phi(-c \, \ell / 2)
}
\ = \
{
1
\over
1 - \acc(\ell)
}
\ = \
{1 \over 1 - 2 \, \Phi(-c \, \ell / 2)}
\, .
$$
As $\ell \searrow 0$, we have
$\Phi(-c \, \ell / 2) \nearrow 1/2$, so
$\acc(\ell) \nearrow 1$, and
this efficiency ratio converges to $+\infty$, as claimed.
\end{proof}

Corollary~\ref{infcor} indicates that
the non-reversible algorithm becomes infinitely
more efficient that the reversible algorithm as the
proposal scaling becomes very small.
% (see Figure~2).
This observation corresponds to the result of \cite[Theorem~3]{syed} that,
as the mesh size goes to zero and number of temperatures $N\to\infty$, the
non-reversible algorithm achieves a higher-order roundtrip rate of $O(1)$,
compared to the reversible algorithm rate of $O(1/N)$.
However, when compared at their optimally scaled values,
the 42\% speed-up of Corollary~\ref{corfortytwo} gives a more
accurate measure of the relative improvement of using a
non-reversible tempering algorithm.

\section{Simulations}
\label{sec-sim}

To test our theory, we performed a detailed computer simulation of both
reversible and non-reversible tempering algorithms on a fixed
target in $d=100$ dimensions.
We performed a total of
% 20,000,000,000
$2 \times 10^{10}$
tempering iterations on
each of 20 different temperature spacing choices over the same
temperature range,
computed in parallel on the
Digital Research Alliance of Canada (DRAC) high-speed compute servers.
To conform to the above framework, we conducted the
simulation on a single target mode,
and counted the total number of round-trips
of the temperature from coldest to hottest and back again.
We used this count
to compute a rate of round-trips per million iterations.  The results
are shown in Figure~3.

\bigskip
\bigskip
\graphit{simplotcrop}
% Rsim
\vskip -0.8cm
$\big.$
\begin{quote}
\baselineskip=12pt
{\bf Figure~3:}\
Simulated round-trip rates for non-reversible (top, green)
and reversible (bottom, blue) tempering algorithms as
a function of the temperature acceptance rate,
together with the theoretical relative efficiency curves
from Theorem~\ref{curvethm}
(scaled by a single appropriate constant multiplier),
and their
optimal scaling values (dashed lines) from Theorem~\ref{optthm},
showing excellent agreement.
% between simulation and theory.
\end{quote}
\bigskip

It is seen from Figure~3 that the simulated round-trip rates
show excellent agreement with the theoretical relative
efficiency curves from Theorem~\ref{curvethm}
(when scaled by a single appropriate constant multiplier, to convert the
relative efficiency measure into the round-trip rate).
This provides convincing evidence that our theoretical results
about relative efficiency of different tempering MCMC algorithms,
do indeed provide useful information about the practical information of
these algorithms to achieve round-trips between the coldest
and hottest temperatures.

\section{Proof of Theorem~\ref{brownianthm}}
\label{sec-proof}

Finally, we now prove Theorem~\ref{brownianthm}.
For notation, let $\Geom(C)$ be the probability distribution with
$\P(k)=(1-C)^k C$ for $k=0,1,2,3,\ldots$,
with expectation $(1-C)/C$ and variance $(1-C)/C^2$.
And, let $\mu$ be the probability distribution on $\pm 1$ with
$\mu(1) = A/(1-C)$ and $\mu(-1) = B/(1-C)$.
(In particular, if $B = 0$, then $\mu(1)=1$.)
% Then we have:

\begin{lemma}
\label{difflemma}
Let $\{(X_n,Y_n)\}_{n=0}^\infty$ follow the Markov chain of Figure~1,
with initial vertical value $Y_0=+$.
% at $(0,+)$ (i.e.\ $X_0=0$ and $Y_0=+$).
Then there are i.i.d.\ random variables $G_n,H_n \sim \Geom(C)$,
and independent $\pm 1$-valued random variables $E_n,F_n \sim \mu$,
% with $\P(E_n=1) = \P(F_n=1) = A/(1-C)$
% and $\P(E_n=-1) = \P(F_n=-1) = B/(1-C)$,
such that for all $n \ge 1$,
$$
X_{T_n} - X_{T_{n-1}} \ = \ \sum_{i=1}^{G_n} E_i - \sum_{i=1}^{H_n} F_i
\, ,
$$
where $T_0=0$ and $T_n = \sum_{i=1}^n (G_i+H_i+2)$.
\end{lemma}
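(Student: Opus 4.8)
The plan is to exploit the regenerative structure of the chain. Starting from $Y_0 = +$, the chain remains in the top row $\IZ \times \{+\}$ making horizontal moves until it flips to the bottom row $\IZ \times \{-\}$; it then remains in the bottom row until it flips back to the top. I would define $T_n$ to be the time of the $n$-th return to the top row after a completed excursion through the bottom row, so that the successive pieces $\{(X_t,Y_t) : T_{n-1} \le t < T_n\}$ form i.i.d.\ cycles by the strong Markov property (each cycle begins in a state $(\cdot,+)$ and is governed by the same translation-invariant kernel).

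Within a single cycle I would analyse the two sojourns separately using a ``marking'' decomposition of the three-way transition. From any top-row state, each step independently flips to the bottom row with probability $C$, and otherwise (probability $1-C = A+B$) makes a horizontal move that is $+1$ with conditional probability $A/(A+B) = A/(1-C) = \mu(1)$ and $-1$ with conditional probability $B/(1-C) = \mu(-1)$. Hence the number $G_n$ of horizontal moves before the flip is $\Geom(C)$, and --- this is the key independence --- conditionally on $G_n$ the signs of those moves are i.i.d.\ $\sim \mu$, so the net top-row displacement is $\sum_{i=1}^{G_n} E_i$ with $E_i \sim \mu$. The flip step itself takes one unit of time and contributes zero horizontal displacement.

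The bottom-row sojourn is identical in structure but with the horizontal bias reversed: each non-flip step moves to $i-1$ with probability $A/(1-C) = \mu(1)$ and to $i+1$ with probability $B/(1-C) = \mu(-1)$, so its signed displacement equals $-F_i$ with $F_i \sim \mu$, giving net bottom-row displacement $-\sum_{i=1}^{H_n} F_i$ and count $H_n \sim \Geom(C)$. Adding one more unit of time for the flip back to the top row, the cycle length is $G_n + H_n + 2$, which yields $T_n = \sum_{i=1}^n (G_i + H_i + 2)$, while the horizontal increment over the cycle is exactly $\sum_{i=1}^{G_n} E_i - \sum_{i=1}^{H_n} F_i$, as claimed.

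The main obstacle is making the independence claims fully rigorous rather than heuristic. Specifically, I must justify (i) that within each sojourn the geometric count and the sequence of signs are independent, which follows from writing each transition as first an independent $\Binomial(1,C)$ flip-decision and then, on a non-flip, an independent sign draw, so that the count (number of non-flips before the first flip) and the signs are measurable with respect to separate sources of randomness; and (ii) that distinct cycles are mutually independent and identically distributed, which is the strong Markov property applied at the stopping times $T_n$ together with the translation-invariance of the kernel in the horizontal coordinate, so that the starting position of a cycle has no effect on the law of its increments or its duration.
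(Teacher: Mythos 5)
Your proof is correct and takes essentially the same approach as the paper's: decompose the trajectory into successive cycles (top-row sojourn, flip, bottom-row sojourn, flip back) with $\Geom(C)$ sojourn lengths and i.i.d.\ $\mu$-distributed conditional increments, then iterate via the Markov property at the return times $T_n$. The paper's version is terser and leaves implicit the independence of the geometric counts from the sign sequences, which your ``marking'' decomposition into an independent flip-decision and sign draw makes explicit --- a useful clarification, but not a different argument.
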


\begin{proof}
Let $G_1$ be the last time just
\un{before} we first move to $\IZ \times\{-\}$.
Then $G_1 \sim \Geom(C)$.

% , with $\P(G_1=0)=C$, $\P(G_1=1)=(1-C)C$, etc.

% Here $\E(G) = (1-C)/C$ and $\Var(G) = (1-C)/C^2$.

Next, let $E_n$ be the increment in the $x$-direction from time $n-1$ to $n$,
\un{conditional} on staying in $\IZ \times\{+\}$,
so $E_n \sim \mu$.
% so $\P(E_n=1) = A/(1-C)$, and $\P(E_n=-1) = B/(1-C)$.
Then the total displacement
\un{before} hitting $\IZ \times \{-\}$ is equal to
$\sum_{n=1}^{G_1} E_n$.
And, this takes $G_1$ steps, plus 1 step to move to $\IZ \times \{-\}$.

Similarly, the total displacement
\un{after} hitting $\IZ \times \{-\}$ but
\un{before} returning to $\IZ \times \{+\}$
is then equal to
$-\sum_{n=1}^{H_1} F_n$,
for corresponding time $H_1 \sim \Geom(C)$ and
independent increments $F_n \sim \mu$.
And this takes $H_1$ steps, plus 1 step to move to $\IZ \times \{-\}$.

It follows that $X_{T_1} := X_{G_1+H_1+2} = 
\sum_{i=1}^{G_1} E_i - \sum_{i=1}^{H_1} F_i$.
Continuing in this way, counting the times when the chain moves from $\IZ
\times \{+\}$ to $\IZ \times \{-\}$ and back, the result follows.
\end{proof}

\begin{lemma}
\label{diffvar}
With $T_n$ as in Lemma~\ref{difflemma}, for all $n \ge 1$ we have
$$
(i) \qquad
\E[T_n - T_{n-1}] \ = \ 2/C
\, ,
$$
and
$$
(ii) \qquad
\E[X_{T_n} - X_{T_{n-1}}] \ = \ 0
\, ,
$$
and
$$
(iii) \qquad
\Var[X_{T_n} - X_{T_{n-1}}] \ = \
2[(A-B)^2/C^2] + 2[(1-C)/C]
\, .
$$
\end{lemma}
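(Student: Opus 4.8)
The plan is to reduce every quantity to the independent building blocks $G_n, H_n \sim \Geom(C)$ and $E_i, F_i \sim \mu$ furnished by Lemma~\ref{difflemma}, and then apply standard identities for sums and for random (compound) sums. Part (i) is immediate: since $T_n - T_{n-1} = G_n + H_n + 2$, linearity of expectation together with $\E[\Geom(C)] = (1-C)/C$ gives $\E[T_n - T_{n-1}] = 2(1-C)/C + 2 = 2/C$.

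For part (ii), I would first record the mean of a single increment, $\E[E_1] = (1)\mu(1) + (-1)\mu(-1) = (A-B)/(1-C)$, and likewise $\E[F_1] = (A-B)/(1-C)$. Conditioning on the independent count $G_n$ (equivalently, Wald's identity) then yields $\E\big[\sum_{i=1}^{G_n} E_i\big] = \E[G_n]\,\E[E_1] = \frac{1-C}{C}\cdot\frac{A-B}{1-C} = (A-B)/C$, and identically for the $H_n, F_i$ sum. Since $X_{T_n} - X_{T_{n-1}}$ is the difference of these two sums, the two means cancel and $\E[X_{T_n} - X_{T_{n-1}}] = 0$.

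Part (iii) is the main computational step. Because the blocks $(G_n, \{E_i\})$ and $(H_n, \{F_i\})$ are mutually independent, the variance of their difference splits as $\Var\big[\sum_{i=1}^{G_n} E_i\big] + \Var\big[\sum_{i=1}^{H_n} F_i\big]$, and by symmetry (the two sums have the same law) this equals twice $\Var\big[\sum_{i=1}^{G_n} E_i\big]$. To evaluate this random-sum variance I would use the law of total variance, $\Var\big[\sum_{i=1}^{N} X_i\big] = \E[N]\,\Var[X] + \Var[N]\,(\E[X])^2$, with $N = G_n$ and $X = E_1$. The key simplification is that $E_1 \in \{\pm 1\}$ forces $E_1^2 \equiv 1$, so $\Var[E_1] = 1 - (\E[E_1])^2 = 1 - (A-B)^2/(1-C)^2$. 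Substituting $\E[G_n] = (1-C)/C$ and $\Var[G_n] = (1-C)/C^2$ and collecting terms, the pieces proportional to $(A-B)^2$ combine to $(A-B)^2/C^2$ while the remaining piece is $(1-C)/C$; doubling gives the stated formula.

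The only real obstacle is the algebra in (iii): one must track that the $\Var[N]\,(\E[X])^2$ term (carrying $\Var[G_n] = (1-C)/C^2$) and the portion of $\E[N]\,\Var[X]$ proportional to $(A-B)^2$ (carrying $\E[G_n] = (1-C)/C$) enter with opposite signs and combine cleanly via the factorization $\frac{1}{C^2} - \frac{1}{C} = \frac{1-C}{C^2}$, which cancels the $1/(1-C)$ prefactor. Everything else is routine.
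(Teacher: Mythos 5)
Your proposal is correct and follows essentially the same route as the paper's proof: decompose each increment into the two independent geometric-length blocks, apply the compound-sum variance formula (your ``law of total variance'' is exactly the paper's ``Wald's identities'' step, with the same moments $\E[E_1]=(A-B)/(1-C)$, $\Var[E_1]=1-(A-B)^2/(1-C)^2$, $\E[G_n]=(1-C)/C$, $\Var[G_n]=(1-C)/C^2$), and double the single-block variance. The only cosmetic difference is in (ii), where the paper simply notes the two sums are identically distributed so their means cancel, whereas you compute the common mean $(A-B)/C$ explicitly via Wald's first identity.
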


\begin{proof}
$(i)$ \ With $G_n$ and $H_n$ as in 
Lemma~\ref{difflemma}, we compute that
$$
\E[T_n - T_{n-1}]
\ = \ \E[G_n + H_n + 2]
\ = \ \E[G_n] + \E[H_n] + 2
$$
$$
\ = \ [(1/C)-1] + [(1/C)-1] + 2
% \ = \ 2(1 + [(1-C)/C])
\ = \ 2/C
\, .
$$

$(ii)$ \ The quantities
$\sum_{i=1}^{G_n} E_i$ and $\sum_{i=1}^{H_n} F_i$ have the same
distribution, and hence the same mean, say $m$.  Therefore,
$$
\E[X_{T_n} - X_{T_{n-1}}]
\ = \ \E\Big[\sum_{i=1}^{G_n} E_i - \sum_{i=1}^{H_n} F_i\Big]
\ = \ m - m
\ = \ 0
\, .
$$

$(iii)$ \ Let $E_n$ and $F_n$ be as in Lemma~\ref{difflemma},
and let $S = \sum_{n=1}^{G_1} E_n$ be the total distance traveled
before first hitting $\IZ \times \{-\}$.

Then
$\E(E_n) = (A-B)/(1-C)$, and
$\E(E_n^2) = 1$, so
$\Var(E_n) = 1 - [(A-B)^2/(1-C)^2]$.

% https://stats.stackexchange.com/questions/74406/variance-of-sum-of-random-number-of-random-variables-cambridge-university-works

% https://math.stackexchange.com/questions/4575218/variance-of-sum-of-independent-random-variables-walds-identities

Hence, using the formula for variance of a random-sized sum from
Wald's identities,
$$
\Var(S)
\ = \ \E(G) \, \Var(E_1) + \Var(G) \, \E(E_1)^2
$$
$$
\ = \ [(1-C)/C] \, [1 - (A-B)^2/(1-C)^2] + [(1-C)/C^2] \, [(A-B)/(1-C)]^2
$$
$$
\ = \
[A^2 -2AB + B^2 + C - C^2] \, / \, C^2
\ = \
[(A-B)^2/C^2] + [(1-C)/C]
$$
\begin{equation}\label{varpiece}
\ = \
[(A-B)^2/C^2] + (A+B)/C
\, .
% \eqno(*)
\end{equation}

Now, \eqref{varpiece} is the variance of a single piece,
i.e.\ the part before moving to $\IZ \times \{-\}$.
Then $X_{T_n}-X_{T_{n-1}}$ is formed by
combining two such pieces, of opposite sign.
Hence, its variance is twice the value in \eqref{varpiece}, as claimed.
% The results follow.
\end{proof}

% In the special case where $B=0$, so $C=1-A$, this becomes
% $\E[T_n-T_{n-1}]=2/(1-A)$, and $\Var[X_{T_n}-X_{T_{n-1}}] = 2 A^2 /
% (1-A)^2 + 2 A / (1-A) = [2A/(1-A)^2] [A+(1-A)] = 2A/(1-A)^2$.

Putting these lemmas together, we obtain our diffusion result:

% , giving
% a CLT and full Functional CLT (i.e., Donsker's Invariance Principle)
% for the $\{X_n\}$ process:

% \begin{theorem}
% \label{brownianthm}
% Let $\{(X_n,Y_n)\}_{n=0}^\infty$ follow the Markov chain of Figure~1,
% beginning at $(0,+)$.  Then for fixed $t>0$,
% as $M\to\infty$, the random variables
% $Z_{M,t} := {1 \over \sqrt{M}} \, X_{Mt}$
% converge weakly to the $N(0,vt)$ distribution, where
% $v = [(A-B)^2/C] + (1-C)$.
% Furthermore, as $M\to\infty$,
% the random process $\{Z_{M,t}\}_{t \ge 0}$ converges
% weakly to Brownian motion with volatility $v$.
% \end{theorem}

\begin{proof}[Proof of Theorem~\ref{brownianthm}]
In the language of
\cite{janson}, following \cite{serfozo},
the Markov chain described by Figure~1
has ``regenerative increments'' over the times $\{T_n\}$
specified in Lemma~\ref{difflemma},
with finite increment means and variances.
Hence, for fixed $t>0$,
the process $W(M) := X_{\lfloor Mt \rfloor}$ has regenerative increments at times $\{T_n/t\}$.
Then, it follows from~\cite[Theorem~1.4]{janson}
that as $M\to\infty$ with $t>0$ fixed, we have $W(M)/\sqrt{M} \equiv X_{\lfloor Mt \rfloor}/\sqrt{M} \to N(0,v)$,
where the corresponding volatility parameter $v$ is computed
(using Lemma~\ref{diffvar}) to be:
$$
v
\ = \ { \Var[X_{T_n}-X_{T_{n-1}}] \over \E[T_n-T_{n-1}] }
\ = \ { 2 [(A-B)^2/C^2] + 2 [(A+B)/C] \over 2/C }
\ = \ [(A-B)^2/C] + (A+B)
\, .
$$
This proves the claim about convergence to $N(0,vt)$ for fixed $t>0$.
(Strictly speaking, Lemmas~\ref{difflemma}
and Lemma~\ref{diffvar} assumed that the chain begins
in a state with $Y_0=+$, but
clearly the initial $Y_0$ value will not matter in the
$M\to\infty$ limit.)
The extended
% functional-CLT (i.e.\ Donsker's Invariance Principle)
claim about convergence of the entire process to Brownian motion then
follows from e.g.\
looking at just the second component in
\cite[Theorem~5]{gutjanson}.
This completes the proof.
\end{proof}

% (see also e.g.\ \cite[Theorem~5]{gutjanson} and \cite[Section~3]{whitt}).

% And the CLT and full Functional CLT (i.e., Donsker's Invariance Principle)
% hold for such renewal processes, see e.g.\
% \cite{janson}, following \cite{serfozo}, and perhaps
% 
% https://arxiv.org/abs/2305.13229

% Indeed, there is a full Functional Central Limit Theorem
% (Donsker's Invariance Principle) guaranteeing that the
% rescaled process converges to Brownian motion, see e.g.\
% Section~3 of ``A Review of Basic FCLT’s'' by Ward Whitt.

% http://www.columbia.edu/~ww2040/8100F16/FCLT_Review.pdf

% More generally, we believe(?) we have the following result:
% 
% \begin{theorem}
% If now $C=C(M)\to 0$ and $A(M) \to A$ and $B(M) \to B$.
% Then if
% 
% (i) $M C(M) \to c>0$ for some $c$
% and $A>B$ then we get a PDMP limit (telegraph process?)
% 
% (ii) $M C(M) \to 0$ then we get a deterministic constant drift process. (Speed (A-B)/A+B))
% 
% (iii)
% $M C(M) \to \infty$ maybe still diffusive?
% 
% (iv)
% If $A(M)=B(M)$ for all $M$ then we get BM again.
% 
% \end{theorem}

\bigskip
\noindent \bf Acknowledgements. \rm
We thank
Nick Tawn, Fernando Zepeda, Hugo Queniat, Saifuddin Syed,
Alexandre Bouchard-C\^ot\'e, Trevor Campbell,
Jeffrey Negrea, and Nikola Surjanovic for helpful discussions
about tempering issues, the latter four at the Statistical Society of Canada
annual conference in Newfoundland in June 2024. 
We thank Svante Janson for very useful guidance about invariance principles,
and Nick Tawn and Fernando Zepeda for
helpful conversations about non-reversible algorithms,
and David Ledvinka for useful suggestions,
and Duncan Murdoch for help with an R question.
We thank Daniel Gruner and Ramses van Zon of the
Digital Research Alliance of Canada
(DRAC) for technical assistance with parallel high-speed computing.
We acknowledge financial support from UKRI grant EP/Y014650/1
as part of the ERC Synergy project OCEAN,
by EPSRC grants Bayes for Health (R018561), CoSInES (R034710),
PINCODE (EP/X028119/1), and EP/V009478/1,
and from NSERC of Canada discovery grant RGPIN-2019-04142.

% \bigskip

\end{document}